\newcommand\NN{\mathbb{N}}
\newcommand\RR{\mathbb{R}}
\newcommand\QQ{\mathbb{Q}}
\newcommand\PP{\mathbb{P}}
\newcommand{\OO}{\mathcal{O}}
\newcommand{\K}{K}
\newcommand{\Ok}{\OO_K}
\newcommand{\units}{{\Ok^\times}}
\newcommand{\id}[1]{\mathfrak{#1}}
\newcommand{\ppp}{\id p}
\newcommand{\aaa}{\id a}
\newcommand{\bbb}{\id b}
\newcommand{\ddd}{\id d}
\newcommand{\places}{{\Omega_K}}
\newcommand{\archplaces}{{\Omega_\infty}}
\newcommand{\nonarchplaces}{{\Omega_0}}
\newcommand{\abs}[1]{\left|#1\right|}
\newcommand{\absv}[1]{\left|#1\right|_v}
\newcommand{\norm}[1]{\left\lVert #1 \right\rVert}
\DeclareMathOperator{\N}{\mathfrak{N}}
\newcommand\xx{\mathbf{x}}
\newcommand\where{\,:\,}
\newcommand\vz{\mathbf{0}}
\newcommand\vl{{\boldsymbol{\lambda}}}
\DeclareMathOperator{\vol}{vol}
\DeclareMathOperator\Res{Res}
\newcommand\vq{\mathbf{q}}
\newcommand\locdegv{{n_v}}
\newcommand\dg{n}
\newcommand\qfheight[1]{{\langle #1 \rangle}}
\newtheorem{theorem}{Theorem}
\newtheorem{lemma}[theorem]{Lemma}
\theoremstyle{definition}
\newtheorem*{acknowledgements}{Acknowledgements}
\numberwithin{theorem}{section}
\numberwithin{equation}{section}
\begin{document}

\setcounter{tocdepth}{1}

\title[
Rational points on smooth cubic surfaces
]{Counting rational points on smooth cubic surfaces 
}

\address{Institut für Analysis und Zahlentheorie, Technische Universität Graz,
 Kopernikusgasse 24/II, 8010 Graz, Austria}
\author{Christopher Frei} \email{frei@math.tugraz.at}

\address{Mathematisch Instituut Leiden, Universiteit Leiden,
2333 CA Leiden, Netherlands}
\author{Efthymios Sofos} \email{e.sofos@math.leidenuniv.nl}

\date{September 23, 2014}

\begin{abstract}
  We prove that any smooth cubic surface defined over any number field
  satisfies the lower bound predicted by Manin's conjecture possibly
  after an extension of small degree.
\end{abstract}

\subjclass[2010] {11D45 (14G05) }

\maketitle

\section{Introduction}
Let $\K$ be a number field.  Assume $X\subset \PP^3_\K$ is a smooth cubic
surface defined over $\K$ for which the set of rational points $X(\K)$
is not empty.  We are concerned with estimating the number of
rational points of bounded height on $X$.  Let $U\subset X$ be
the Zariski-open set obtained by removing the lines contained in $X$,
denote by $H$ the exponential Weil height on $\PP^3(\K)$ and define
for all $B\geq 1$ the counting function
\begin{equation*}
  N_{K,H}(U,B):= \sharp\{\xx \in U(\K) \where H(\xx)\leq B\}.
\end{equation*}
Manin's conjecture \cite{manin}
for smooth cubic surfaces states that
\begin{equation}
\label{eq:manin}
N_{K,H}(U,B) \sim c B (\log B)^{\rho_{X\!,\K} -1},
\end{equation}
as $B \to \infty$, where $\rho_{X\!,\K}$ denotes the rank of the Picard
group of $X$ over $K$ and $c=c_{\K,H,X}$ is a positive constant which
was later interpreted by Peyre \cite{peyre}.

There has been a wealth of results towards this conjecture but it has
never been established for a single smooth cubic surface over any
number field. There are proofs of Manin's conjecture for certain
singular cubic surfaces over $\QQ$, e.g.~\cite{MR2332351}, and other
number fields \cite{MR1679843, MR3107569, E6}, but here we will only
consider the smooth case. Heath-Brown
\cite{FourThirds}, building upon the work of Wooley \cite{trev},
proved, using a fibration argument, that if $X$ is a smooth cubic
surface defined over $\QQ$ that contains $3$ rational coplanar lines then $N_{\QQ,H}(U,B) \!  \ll_{X,\epsilon} \!
B^{\frac{4}{3}+\epsilon}$ holds for any $\epsilon>0$. This result was
subsequently extended to arbitrary number fields by Broberg
\cite{broberg} and Browning and Swarbrick Jones \cite{mikew}.
Heath-Brown \cite{mestre} revisited the subject by proving that a
bound of the same order holds for all smooth cubic surfaces defined
over $\QQ$ subject to a standard conjecture regarding the growth rate
of the rank of elliptic curves.  Using a generalization of
Heath-Brown's determinant method, Salberger \cite{salberger} was able
to prove unconditionally that one has
$N_{\QQ,H}(U,B)\!\ll_{\epsilon}\!  B^{\frac{12}{7}+\epsilon}$ for
arbitrary smooth cubic surfaces defined over $\QQ$ and for all
$\epsilon>0$.
Regarding lower bounds, the only available result is
due to Slater and Swinnerton-Dyer \cite{MR1679836} who used a secant
and tangent process to establish that $N_{\QQ,H}(U,B)\!\gg_{X}\!  B
(\log B)^{\rho_{X\!,\QQ} -1}$ whenever $X$ 
has
$2$ skew
lines defined over $\QQ$.

Our main result shows that for all smooth cubic surfaces over any
number field $L$, the lower bound predicted by Manin's conjecture has
the correct order of magnitude as soon as one passes to a sufficiently
large extension of $L$. Some context for this type of result is
provided by the formulation of Manin's conjecture in \cite{MR1032922} and
by the notion of potential density \cite[\S 3]{MR2275615}.

\begin{theorem}
\label{thm:all}
  Let
$X$ be any smooth cubic surface
defined
over any number field
 $L$.
Then there exists an extension
$\K_0\!$ of $L$ with $[\K_0\!:\!L] \leq 432$
such that
for all number fields
$\K \supseteq \K_0$
we have
  \begin{equation*}
    N_{K,H}(U,B) \gg B(\log B)^{\rho_{X\!,\K}-1},
  \end{equation*}
as $B\to \infty$, where the implicit constant
depends at most on $X$ and $\K$.
\end{theorem}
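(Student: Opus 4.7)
The plan is to split the argument into a Galois-theoretic step that produces a small extension $K_0$ over which $X$ acquires two skew lines, followed by a counting step that extends the secant-and-tangent method of Slater and Swinnerton-Dyer to an arbitrary number field.

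For the first step, recall that the 27 lines on $X_{\overline{L}}$ are permuted by $\Gal(\overline{L}/L)$ through its image in the Weyl group $W(\Esix)$ of order $51840$. On a smooth cubic surface each line is skew to exactly sixteen of the others, so there are $27 \cdot 16 = 432$ ordered pairs of skew lines, and $W(\Esix)$ acts transitively on this set: the stabiliser of a single line is isomorphic to $W(\Dfive)$ of order $1920$, and this subgroup acts transitively on the sixteen lines skew to the chosen one. Consequently, for any ordered pair $(\ell_1,\ell_2)$ of skew lines on $X_{\overline{L}}$, its orbit under $\Gal(\overline{L}/L)$ has size at most $432$. Taking $K_0$ to be the fixed field of the stabiliser of $(\ell_1,\ell_2)$ yields $[K_0:L]\leq 432$ and realises both lines over $K_0$; they remain defined over every number field $K\supseteq K_0$.

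For the second step, fix skew lines $\ell_1,\ell_2\subset X$ defined over $K$. To each pair $(P_1,P_2)\in \ell_1(K)\times \ell_2(K)$ the secant line through $P_1$ and $P_2$ meets $X$ in a well-defined third point $P_3\in X(K)$, lying in $U(K)$ outside a proper closed subset. The height of $P_3$ satisfies $H(P_3)\ll H(P_1)H(P_2)$, so summing over pairs whose height product is at most $B$ produces many points in $U(K)$ of height at most $B$. Iterating this construction, either by replacing $\ell_1$ with a line produced at a previous stage or by combining with a tangent process along a line, yields further families and introduces additional logarithmic factors. The exponent $\rho_{X\!,\K}-1$ is matched by arranging for the number of independent accessory parameters being averaged over to equal the rank of $\Pic(X_K)$, or equivalently the number of Galois-stable conic bundle structures on $X$ that the iterated construction exposes.

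The principal obstacle lies in carrying out the quantitative counting over an arbitrary number field $K$, rather than just $\QQ$: the elementary estimates for points of bounded height on lines that suffice in the original work must be replaced by sharper counts which are sensitive to the class group, the unit group $\units$, and the archimedean places of $K$. Summing over pairs with height product at most $B$ demands a Dirichlet-hyperbola argument adapted to $K$, and extracting precisely $(\log B)^{\rho_{X\!,\K}-1}$ requires bookkeeping all the conic bundles over $K$ produced by iteration and showing none of the logarithmic factors are lost when passing from $\ZZ$-point counts on fibres to $\Ok$-point counts. A secondary issue is verifying that, after deleting a thin exceptional set of pairs $(P_1,P_2)$, the produced points $P_3$ genuinely lie in $U(K)$, i.e.\ avoid the remaining twenty-five lines of $X$.
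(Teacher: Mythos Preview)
Your reduction step is fine and matches the paper's: they also take $K_0$ to be a field over which two skew lines are defined and observe $27\cdot 16=432$. The substantive content is the counting result over $K$ (the paper's Theorem~\ref{thm:ssd}), and here your plan diverges from the paper and, as written, has a real gap.

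The paper does \emph{not} generalise the Slater--Swinnerton-Dyer secant-and-tangent process. Instead it uses the two skew lines to put $X$ into the conic-bundle form \eqref{eq:cubic_form}, applies a uniform Manin-type asymptotic for isotropic conics over $K$ (Lemma~\ref{prop:conics}) to each smooth fibre, and then shows that the \emph{sum of the Peyre constants} of the fibres over $(s:t)$ with $H((s:t))\leq B^\delta$ is $\gg(\log B)^{\rho_{X,K}-1}$. The logarithmic exponent comes out of a Wirsing-type estimate (Lemma~\ref{thm:wirsing}) applied to the fibre discriminant $\Delta(s,t)$, using the identity $\rho_{X,K}=2+r$ with $r$ the number of irreducible $K$-factors of $\Delta$.

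In your proposal the mechanism producing $(\log B)^{\rho_{X,K}-1}$ is the weak point. The secant map $\ell_1\times\ell_2\to X$ is birational, so a hyperbola-type sum over pairs $(P_1,P_2)$ with $H(P_1)H(P_2)\le B$ naturally gives only one logarithm; getting more requires something further. Your suggestion to ``iterate by replacing $\ell_1$ with a line produced at a previous stage'' does not make sense as stated, since the secant process outputs \emph{points}, not lines, and the phrase ``the number of conic bundle structures the iterated construction exposes'' is not a mechanism but a restatement of the desired answer. In Slater--Swinnerton-Dyer's original argument the extra logarithms come from a delicate fibrewise divisor-sum estimate tied to the arithmetic of the fibre discriminant, not from iteration; adapting that step to general $K$ is exactly where the difficulty lies, and you have not indicated how to do it. The paper sidesteps this entirely by counting on conics uniformly and pushing the arithmetic of $\Delta$ into a clean Wirsing application.
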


We hope that the number $432$ will serve as a useful benchmark for researchers
in the area to compare the strength of other methods with in the future.

Our Theorem \ref{thm:all} is a consequence of Theorem \ref{thm:ssd}
below, which furthermore provides an explicit description of
$\K_0$. One can take $\K_0$ to be any extension of $L$ over which $2$
skew lines of $X$ are defined. The fact that there exists such a
$\K_0$ with $[\K_0:L]\leq 432=27 \cdot 16$ can be proved as follows.
Since $X$ contains exactly $27$ lines, each of them is defined over an
extension of degree at most $27$. Since there are $16$ complex lines
skew to a line $\ell$, a further extension of degree at most $16$
ensures that a line skew to $\ell$ is defined.

\begin{theorem}
\label{thm:ssd}
  Let
$X$ be a smooth cubic surface
defined
over any number field
 $\K$ such that $X$
contains
two skew lines defined over $\K$.
Then 
  \begin{equation*}
    N_{\K,H}(U,B) \gg B(\log B)^{\rho_{X\!,\K}-1},
  \end{equation*}
as $B\to \infty$, where the implicit constant
depends only on $\K$ and $X$.
\end{theorem}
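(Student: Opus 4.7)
The plan is to adapt the secant construction of Slater and Swinnerton-Dyer~\cite{MR1679836} from $\QQ$ to an arbitrary number field $\K$. Let $\ell_1,\ell_2 \subset X$ be the two skew $\K$-lines. For a generic pair $(P_1,P_2) \in \ell_1(\K) \times \ell_2(\K)$, the secant line $\overline{P_1 P_2}$ meets $X$ in a unique third point $P_3 = \phi(P_1,P_2) \in X(\K)$, and the resulting map $\phi\colon \ell_1 \times \ell_2 \dashrightarrow X$ is birational. After fixing $\K$-isomorphisms $\ell_i \cong \PP^1_\K$, a direct computation using the trilinear polarisation of the defining cubic shows that $\phi(P_1,P_2) = -g\cdot P_1 + h\cdot P_2$, where $g,h$ are bi-forms in the parameters of $\ell_1,\ell_2$ of bi-degrees $(1,2)$ and $(2,1)$ respectively.

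Since $\phi$ is generically injective, it suffices to bound below the number of pairs $(P_1,P_2)$ with $P_3 \in U(\K)$ and $H(P_3) \leq B$. I would lift each $P_i$ to a primitive vector in a fundamental domain for $\units$ acting on $\Ok^2$, with one integral representative per class in the class group of $\K$. The height then satisfies $H(P_3) \asymp \|\phi(P_1,P_2)\|_\infty / \mathrm{N}(\ccc)$, where $\ccc \subset \Ok$ is the fractional ideal generated by the four coordinates of $\phi(P_1,P_2)$. The ideal $\ccc$ captures the cancellations built into the secant construction and is controlled by the geometry of the conic bundle $\pi\colon X \to \PP^1$ defined by the pencil of planes through $\ell_1$, for which $\ell_2$ provides a section.

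The counting now reduces to a multiplicative sum whose logarithmic powers are dictated by the reducible fibres of $\pi$: a smooth cubic surface possesses exactly $5$ geometrically reducible fibres of this conic bundle, each splitting as a pair of lines of $X$. Each $\Gal(\Kbar/\K)$-stable reducible fibre will produce an independent divisor-sum factor contributing one further $\log B$. Combined with one $\log B$ arising from the hyperbola-type sum over pairs $(P_1,P_2)$ with $H(P_1) H(P_2)$ bounded appropriately, the total should come to $(\log B)^{\rho_{X\!,\K}-1}$, matching the rank of $\Pic(X_{\Kbar})^{\Gal(\Kbar/\K)}$, which is generated over $\ZZ$ by the fibre class, the section class, and the components of the reducible fibres.

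The main obstacle is to show that these divisor sums produce exactly $\rho_{X\!,\K}-1$ logarithmic factors with a positive leading constant. This requires a careful analysis of the Galois structure of $\ccc$ and an honest treatment of the set of pairs $(P_1,P_2)$ whose image fails to lie in $U$ or falls on an accumulating subvariety, so that such pairs may be excluded without damaging the main term. A secondary technical difficulty will be handling the class group and unit group of $\K$ uniformly: the hyperbola summation and the distribution of primitive lifts across fundamental domains must be executed so that no logarithmic savings are lost when passing from $\QQ$ to a general number field.
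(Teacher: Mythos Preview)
Your proposal follows the original Slater--Swinnerton-Dyer secant approach, whereas the paper explicitly takes a different route: it uses the conic bundle morphism $\pi\colon X\to\PP^1_\K$ determined by the pencil of planes through one of the lines, together with a uniform version of Manin's conjecture for conics (Lemma~\ref{prop:conics}) to count points on each smooth fibre with an error term polynomial in the coefficients of the fibre. Summing over fibres with $H((s:t))\le B^\delta$ reduces the problem to estimating $\mathfrak{S}(B^\delta)=\sum c(s,t)$, the sum of Peyre constants of the fibres; the archimedean densities contribute a factor $H((s:t))^{-2}$ producing one $\log B$, and the non-archimedean densities are bounded below by a divisor sum over $\aaa\mid\Delta(s,t)$, which via a variant of Wirsing's theorem (Lemma~\ref{thm:wirsing}) yields the remaining $(\log B)^r$ with $r$ the number of $\K$-irreducible factors of the discriminant quintic $\Delta$. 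This is more conceptual: the logarithmic exponent arises directly from the factorisation of $\Delta$, and the passage from $\QQ$ to general $\K$ is absorbed into the two black boxes (the uniform conic count and Wirsing), rather than being threaded through a gcd analysis on $\ell_1\times\ell_2$.

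Your plan is plausible in outline --- the secant map is indeed birational and the cancellation ideal $\ccc$ is governed by the reducible fibres of $\pi$ --- but as written it is a sketch, not a proof. The two places you flag as ``obstacles'' are precisely where all the content lies: you have not explained how the Galois structure of $\ccc$ is analysed to produce a multiplicative function amenable to Wirsing-type asymptotics, nor how the hyperbola sum over $(P_1,P_2)$ interacts with the divisor conditions to yield exactly $1+r$ logarithms rather than fewer. In the $\QQ$-case Slater and Swinnerton-Dyer carry this out by an explicit and somewhat intricate gcd computation; over a general number field the class group and unit group complications you mention are real, and nothing in your proposal indicates how they are handled. By contrast, the paper's fibration-plus-Peyre-constant approach sidesteps the gcd bookkeeping entirely, at the cost of needing the uniform conic estimate of Lemma~\ref{prop:conics} as input.
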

Theorem \ref{thm:ssd} is a generalization of Slater and
Swinnerton-Dyer's result to arbitrary number fields. 
Our proof however
is entirely
different
and more conceptual
than the one 
of Slater and
Swinnerton-Dyer.  
It relies on a conic
bundle
fibration of $X$ and a number field version of the earlier work
\cite{sofos} of the second author which allows us to count rational
points on each conic individually.

This result is presented in Section \ref{sec:preliminaries}, together
with our main analytic tool, a variant of Wirsing's theorem. Theorem
\ref{thm:ssd} will be proved in Sections
\ref{sec:conic_bundle_structure} and \ref{sec:proof}.

Throughout this paper, all implied constants are allowed to depend on
the cubic surface $X$ and the underlying number field $\K$, unless the
contrary is explicitly stated.

\section{Preliminaries}
\label{sec:preliminaries}
We denote the degree of $\K$ by $n$, its discriminant by $\Delta_K$,
and its ring of integers by $\Ok$. We write $\archplaces$,
$\nonarchplaces$ and $\places$ for the sets of archimedean places,
non-archimedean places, and all places of $\K$, respectively. We will
write $h_\K$, $R_\K$ and $\mu_\K$ for the class number, regulator and
the group of roots of unity in $\K$. Moreover, $r_1$ (resp. $r_2$)
denotes the number of real (resp. complex) embeddings of $\K$.
 
In the proof of Theorem \ref{thm:ssd}, we 
only 
need to consider a
special family of height functions on $\PP^2(K)$. Let $\vl =
(\lambda_v)_{v\in\archplaces}\in (0,\infty)^\archplaces$. For every $v
\in \places$, and $\xx=(x,y,z)\in K_v^3$, let
\begin{equation}\label{eq:def_norms}
  \norm{\xx}_{\vl, v} :=
  \begin{cases}
    \max\{\absv{x}, \lambda_v\absv{y},\absv{z}\} &\text{ if }v\in\archplaces\\
    \max\{\absv{x}, \absv{y},\absv{z}\} &\text{ if }v\in\nonarchplaces.
  \end{cases}
\end{equation}
Here, $\absv{\cdot}$ is the unique absolute value on $\K_v$ extending
the usual absolute value on $\QQ_p$, if $v$ lies over the place $p$ of $\QQ$. Let $\locdegv := [\K_v :\QQ_p]$. We consider heights on $\PP^2(\K)$ defined by
\begin{equation*}
  H_{\vl}((x:y:z)) := \prod_{v\in\places}\norm{(x,y,z)}_{\vl, v}^{\locdegv}.
\end{equation*}

Let $C \subset \PP^2_\K$ be a nonsingular conic defined by a quadratic
form $Q \in \Ok[x,y,z]$ and assume that $C(\K)\neq \emptyset$, which
implies that $C \cong \PP^1_\K$.  The heights $H_\vl$ induce
heights on $C(\K)$ via the embedding $C \subset \PP^2_\K$.  We are
interested in estimating the quantity
\begin{equation*}
  N_{K,H_\vl}(C,B) := \sharp\left\{\xx \in C(\K)\where H_{\vl}(\xx)\leq B\right\}
\end{equation*}
when the underlying quadratic form has the special shape
\begin{equation}\label{eq:Q_special_shape}
  Q = ax^2 + bxy + dxz + eyz + fz^2,
\end{equation}
with $a,b,d,e,f \in \Ok$. It is a simple task to write down an
explicit isomorphism between $C$ and $\PP^1_K$. Let $\Pi$ be the matrix
\begin{equation*}
  \Pi := 
  \begin{pmatrix}
    b&e&0\\-a&-d&-f\\0&b&e
  \end{pmatrix},
\end{equation*}
and define
\begin{equation}\label{eq:def_q}
  \vq(u,v) := \Pi \cdot
  \begin{pmatrix}
    u^2\\uv\\v^2
  \end{pmatrix}.
\end{equation}
Then the map $(u,v)\mapsto \vq(u,v)$ induces an isomorphism $\PP^1_K
\to C$. To measure the form $Q$ and the height $H_\vl$, we introduce quantities
\begin{equation*}
  \qfheight{Q} := \prod_{v\in\places}\max\{\absv{a},\absv{b},\absv{d},\absv{e},\absv{f}\}^{n_v} \text{ and }\   M_\vl := \prod_{v\in\archplaces}\max\{1, \lambda_v^{-1}\}^{n_v}.
\end{equation*}
The following lemma is a number field version of
\cite[Prop. 2.1]{sofos}, specialized to the heights $H_\vl$ and with a
crude estimation of the error term. 

\begin{lemma}\label{prop:conics}
There exist constants
$\beta \in (0,1/2)$
and
$\gamma>0$
which depend at most on $\K$
such that
whenever $C \subset \PP^2_K$ is a nonsingular conic defined by a
  quadratic form $Q$ as in \eqref{eq:Q_special_shape}, and $\vl \in
  (0,\infty)^\archplaces$,
  then
  \begin{equation*}
    N_{\K, H_\vl}(C,B) = c_{\K, \vl, C}\cdot B + O\left(B^{1-\beta} (M_\vl \qfheight{Q})^{\gamma}\right),
  \end{equation*}
  for $B \geq 1$. The
  leading constant $c_{\K,\vl,C}$ is positive and is the one predicted by Peyre,
  and the implied constant in the error term depends only on $\K$.
\end{lemma}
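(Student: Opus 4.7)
The plan is to adapt the proof of \cite[Prop.~2.1]{sofos} to the number field setting. The isomorphism $(u:v)\mapsto \vq(u,v)$ identifies $C(\K)$ with $\PP^1(\K)$, and the special shape of $Q$ gives the factorization
\[
\vq(u,v) = \bigl(u(bu+ev),\; -(au^2 + duv + fv^2),\; v(bu+ev)\bigr),
\]
so that the first and third coordinates share the linear factor $bu+ev$. This structural feature is what makes the height on $C$ tractable as a function of $(u,v)$.

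To move from projective to integral coordinates, I would fix representatives $\aaa_1,\dots,\aaa_{h_\K}$ of the ideal class group and, for each $\aaa_i$, parametrise points $(u:v)\in\PP^1(\K)$ by pairs $(u,v)\in\aaa_i^2$ with $(u,v)\Ok = \aaa_i$, taken modulo $\units$. The condition $(u,v)\Ok = \aaa_i$ is then unfolded by M\"obius inversion over integral ideals. Using the product formula and pulling out the common factor $bu+ev$, the height $H_\vl(\vq(u,v))$ becomes an archimedean product that depends polynomially on $(u,v)$ and on $(\lambda_v)_v$, with the non-archimedean contribution absorbed into the ideal-class and M\"obius data.

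For each fixed ideal class and M\"obius parameter, the count reduces to a lattice point problem inside a bounded region $R(B)\subset K_\infty^2 \cong \RR^{2\dg}$, after quotienting by the unit action via a fundamental domain in the logarithmic embedding. I would then invoke a Schanuel/Davenport-type lattice point estimate to evaluate this as $\vol(R(B))/\mathrm{covol}(\aaa_i^2) + O(B^{1-1/(2\dg)}\cdot L)$, where $L$ is a Lipschitz scale of the boundary. Summing over ideal classes and M\"obius divisors, and matching the resulting product of volumes with the local densities of Peyre's constant place by place, yields the main term $c_{\K,\vl,C}\cdot B$.

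The main obstacle is uniformity of the error term in both $Q$ and $\vl$. The constant $L$ scales polynomially in $M_\vl$ and $\qfheight Q$, and the M\"obius-inverted divisor sum must be truncated so that the combined error takes the stated form $O(B^{1-\beta}(M_\vl\qfheight Q)^\gamma)$; the restriction $\beta\in(0,1/2)$ is precisely what provides enough margin to absorb the tail of the divisor sum against the lattice-count error. The passage from $\QQ$ to $\K$ then contributes only benign factors depending on $\K$ alone: the class number $h_\K$, the regulator $R_\K$ through the unit fundamental domain, and the number of archimedean places.
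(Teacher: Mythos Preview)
Your outline matches the paper's own sketch almost exactly: parametrise $C(\K)$ by $\vq$, split into ideal classes and a unit fundamental domain, unfold coprimality by M\"obius inversion, and reduce to a lattice point count whose main term reassembles into Peyre's constant; the paper gives no more detail than this and refers to \cite{Marta} for the standard number-field manipulations. The one concrete choice the paper makes that you do not name is to perform the lattice point count via the o-minimal result of Barroero--Widmer \cite{arXiv:1210.5943} rather than a raw Davenport estimate, remarking that the special shape of $H_\vl$ is precisely what guarantees definability of the height regions, and it records the explicit values $\beta=1/(3n)$ and $\gamma=4$.
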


Of course, Manin's conjecture for conics with respect to arbitrary
anticanonical height functions is already known \cite{peyre}, so the
novelty of Lemma \ref{prop:conics} lies in the uniformity of the
estimate in the coefficients of the underlying quadratic form.

The proof over $\QQ$ in \cite{sofos} is based on the parameterization
of $C(K)$ via $\vq$, which reduces the estimation of
$N_{\K,H_\vl}(C,B)$ to a lattice point counting argument. The same
reduction works over arbitrary number fields by considering primitive
points with respect to a fixed set of representatives for the ideal
classes of $\Ok$ and suitably chosen fundamental domains for the
action of the unit group. The resulting lattice point counting problem
can then be solved using, for example, the main result from
\cite{arXiv:1210.5943}. The special shape of the heights $H_\vl$
enters only here, to ensure definability in an o-minimal
structure. Altogether, the passage from $\QQ$ to arbitrary number
fields in the proof of Lemma \ref{prop:conics} uses mostly arguments
already given in \cite{Marta}, but is straightforward and much
simpler. The proof provides explicit values $\beta=1/(3n)$ and
$\gamma=4$, but we will not give further details here. For the purpose
of proving Theorems \ref{thm:all} and \ref{thm:ssd} we do not need
explicit values for $\beta$ and $\gamma$ since any polynomial saving
in terms of $B$ and any polynomial dependence on $\qfheight{Q}$ and
$M_\vl$ in the error term suffices.

As usual, the constant $c_{\K,\vl,C}$ has an explicit expression of
the form
 \begin{equation}\label{eq:constant_product_formula}
   c_{\K,\vl,C} = \frac{1}{2}\cdot \frac{2^{r_1}(2\pi)^{r_2}h_K R_K}{|\mu_K|}\cdot\frac{1}{\abs{\Delta_K}}\cdot\prod_{v\in\places}\sigma_v,
 \end{equation}
 with local densities $\sigma_v$ given as follows. For
 $v\in\archplaces$, we have
\begin{equation}\label{eq:sigma_v_arch}
  \sigma_v = \vol\{(y_1,y_2)\in \K_v^2 \where \norm{\vq(y_1,y_2)}_{\vl, v} \leq 1\}\cdot
  \begin{cases}
    1 &\text{ if $v$ is real,}\\
    4/\pi &\text{ if $v$ is complex,}
  \end{cases}
\end{equation}
where $\vol(\cdot)$ denotes the usual Lebesgue measure on $\K_v^2 \cong
\RR^{2\locdegv}$.  For $v \in \nonarchplaces$ corresponding to a prime
ideal $\ppp$ of $\Ok$, we have
\begin{equation}\label{eq:sigma_v_nonarch}
  \sigma_v = 1-\frac{1}{\N\ppp^2} + \left(1-\frac{1}{\N\ppp}\right)\sum_{d\in\NN}\frac{\rho_\vq^*(\ppp^d)}{\N\ppp^d},
\end{equation}
where, for any ideal $\aaa$ of $\Ok$,
the function
$  \rho_\vq^*(\aaa)$
is defined as
\begin{equation}\label{eq:def_rho}
 \sharp\{(\sigma,\tau)\in (\Ok/\aaa)^2 \where \sigma\Ok+\tau\Ok+\aaa=\Ok,\ \vq(\sigma,\tau)\equiv \vz \bmod\aaa\}.
\end{equation}

The following version of Wirsing's theorem is a straightforward generalization to number fields of \cite[Theorem A.5]{OperaDeCribro}. Its proof is, mutatis mutandis, the same and therefore omitted.

\begin{lemma}\label{thm:wirsing}
  Let $g$ be a multiplicative function on nonzero ideals of $\Ok$ that
  is supported on the set of squarefree ideals. Assume that
we have
  \begin{equation}\label{eq:a15}
    \sum_{\N\ppp\leq x}g(\ppp)\log(\N\ppp) = k \log x + O(1)
  \end{equation}
  for all $x \geq 2$, with $k \geq -1/2$, where the sum runs over nonzero
  prime ideals $\ppp$ and the implied constant is allowed to depend at
  most on $K$ and $g$.  Assume, moreover, that
  \begin{equation}\label{eq:a16}
    \prod_{w\leq \N\ppp < z}\left(1 + \abs{g(\ppp)}\right)\ll \left(\frac{\log z}{\log w}\right)^{\abs{k}}
  \end{equation}
holds for all $z > w \geq 2$ and that
\begin{equation}\label{eq:a17}
  \sum_{\ppp}g(\ppp)^2\log(\N\ppp) < \infty.
\end{equation}
Then
\begin{equation*}
  \sum_{\N\aaa\leq x}g(\aaa) = c_g (\log x)^k + O((\log x)^{|k|-1}),
\end{equation*}
with a positive constant $c_g,$
where the implied constant depends at most on $\K$ and $g$.
\end{lemma}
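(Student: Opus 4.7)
The plan is to adapt the elementary proof of the classical Wirsing theorem (\cite[Theorem A.5]{OperaDeCribro}) to the number-field setting by systematically replacing rational primes $p$ by prime ideals $\ppp$ of $\Ok$ and the modulus $|n|$ by the ideal norm $\N\aaa$. Since $\Ok$ has unique factorization of ideals, all combinatorial manipulations transfer directly; only the input estimates need to be identified.

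The starting point is the decomposition obtained from $\log\N\aaa = \sum_{\ppp\mid\aaa}\log\N\ppp$ together with the squarefree support of $g$: for any squarefree $\aaa$ with $\ppp\mid\aaa$, writing $\aaa = \ppp\bbb$ with $\ppp\nmid\bbb$ gives $g(\aaa) = g(\ppp)g(\bbb)$, hence
\[
\sum_{\N\aaa \leq x} g(\aaa)\log\N\aaa \;=\; \sum_{\ppp} g(\ppp)\log\N\ppp \sum_{\substack{\N\bbb\leq x/\N\ppp\\ \ppp\nmid\bbb}} g(\bbb).
\]
Setting $A(x) := \sum_{\N\aaa \leq x} g(\aaa)$, partial summation combined with hypothesis \eqref{eq:a15} converts this "prime-by-prime" identity into an expression for the logarithmic moment of $A$ in terms of $A$ itself.

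Next I would couple the identity $A(x)\log x = \sum_{\N\aaa \leq x}g(\aaa)\log\N\aaa + \sum_{\N\aaa\leq x}g(\aaa)\log(x/\N\aaa)$ with the above to obtain an integral-functional relation of the approximate form $A(x)\log x = k\int_1^x A(t)\,\dd t/t + O(\text{error})$. Hypothesis \eqref{eq:a17} controls the "diagonal" correction arising from pairs $(\ppp,\bbb)$ with $\ppp\mid\bbb$ in the decomposition, while hypothesis \eqref{eq:a16} supplies the a priori bound $A(x) \ll (\log x)^{|k|}$ needed to bootstrap the iteration. Solving the relation by induction on $\lfloor\log x\rfloor$ produces the asserted asymptotic $A(x) = c_g(\log x)^k + O((\log x)^{|k|-1})$; positivity of $c_g$ is obtained from the assumption $k \geq -1/2$ via the standard argument identifying $c_g$ with a convergent product over prime ideals.

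The main obstacle is essentially bookkeeping rather than analytic depth: every application of Mertens' theorem in the classical proof must be replaced by its number-field analogue $\sum_{\N\ppp\leq x}(\log\N\ppp)/\N\ppp = \log x + O(1)$, which follows from Landau's prime ideal theorem, and one must ensure at each step that the implied constants depend only on $K$ and $g$. Once these routine substitutions are in place, the argument from \cite{OperaDeCribro} goes through without further change, which is why the authors feel justified in omitting the details.
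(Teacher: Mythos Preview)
Your proposal is correct and matches the paper's own treatment: the authors simply state that the proof is, mutatis mutandis, identical to that of \cite[Theorem~A.5]{OperaDeCribro} and omit all details. Your outline of how the classical argument transfers (replacing primes by prime ideals, using Landau's prime ideal theorem for the number-field Mertens estimate) is exactly the intended ``mutatis mutandis''.
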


\section{Covering the cubic surface with conics}\label{sec:conic_bundle_structure}
Let $\K$ be a number field and $X \subset \PP^3_\K$ a smooth cubic
surface containing two skew lines defined over $\K$.  The residual
intersection of $X$ with a plane containing the first line generically
defines a smooth conic. The second line contained in $X$ intersects
each such plane in a point that necessarily lies in the residual conic,
thus showing that it is isotropic over $\K$.

The construction we have described does in fact yield a conic bundle morphism.
A linear change of variables
allows us to assume that the two skew $\K$-lines
are given by
\begin{equation*}
  x_0=x_1=0\quad \text{ and }\quad x_2=x_3=0,
\end{equation*}
whence
the cubic form defining $X$ has the shape
\begin{equation}\label{eq:cubic_form}
  F = a(x_0,x_1)x_2^2 + d(x_0,x_1)x_2x_3 + f(x_0,x_1)x_3^2 + b(x_0,x_1)x_2 + e(x_0,x_1)x_3,
\end{equation}
where $a,d,f\in {\OO_\K}[x_0,x_1]$ are linear forms and
$b,e\in{\OO_\K}[x_0,x_1]$ are quadratic forms.  Moreover, $F= x_0 Q_0
- x_1 Q_1$ with quadratic forms $Q_0, Q_1 \in
{\OO_\K}[x_0,\ldots,x_3]$, and the nonsingularity of $X$ implies that
the morphism $\pi : X \to \PP^1_\K$ given on points by
\begin{equation*}
  (x_0:x_1:x_2:x_3)\mapsto
  \begin{cases}
    (x_0:x_1) &\text{ if }(x_0,x_1)\neq (0,0)\\
    (Q_1(\xx): Q_0(\xx)) &\text{ if }(Q_1(\xx), Q_0(\xx))\neq (0,0)
  \end{cases}
\end{equation*}
is well--defined. The fibre $\pi^{-1}(s:t)$ is the residual conic in
the plane $\Lambda_{(s:t)}$ defined by $tx_0-sx_1=0$. For any choice of
$(s,t)$, it is isomorphic to the plane conic $C_{(s,t)}$ defined by the quadratic form
\begin{equation}\label{eq:plane_conic}
Q_{(s,t)}:=a(s,t)x^2 + d(s,t)x z + f(s,t) z^2 + b(s,t)x y + e(s,t) y z=0
\end{equation}
via the isomorphism $\phi_{(s,t)} : \PP^2_K \to \Lambda_{(s:t)}$ given by
$(x:y:z)\mapsto (sy:ty:x:z)$. 

The discriminant locus of $\pi$
is given by
the quintic binary form
\[\Delta(s,t) := (ae^2- b d e +fb^2)(s,t),\]
which is separable owing to the nonsingularity of $X$ (see
\cite[II.6.4, Proposition 1]{MR1328833}). 
This confirms that the resultant
\[W_0:=\text{Res}(b(s,t),e(s,t))\]
must be in ${\OO_\K}\smallsetminus\{0\}$,
since the square of any common divisor of
$b(s,t)$ and $e(s,t)$ divides $\Delta(s,t)$.

Clearly, each $C_{(s,t)}$ contains the
rational point $(0:1:0)$, which is tantamount to
the
conic bundle morphism having a section
defined over $\K$. By a standard argument (see,
e.g., the paragraph following (1.6) in \cite{MR2838351}), we have
\begin{equation}\label{eq:pic_factors}
\rho_{X\!,\K}=2+r,
\end{equation}
where $r=r(X,\K)$ is the number of split singular fibres above closed
points of $\PP^1_\K$. Since the section meets
exactly one component of every singular fibre, we see that all
singular fibres are split. Consequently $r$ equals the number of
irreducible factors of $\Delta(s,t)$ in $\K[s,t]$.

\medskip

Using the conic fibration described above, we can reduce counting
points on $X$ to counting points on the fibres $\pi^{-1}(s:t)$ as follows:
\begin{equation*}
  N_{\K,H}(U,B) = \sum_{(s:t)\in\PP^1(\K)}N_{\K, H}(\pi^{-1}(s:t)\cap U,B).
\end{equation*}
Let $\mathcal{G}$ be a fundamental domain for the action of $\units$
on $(\K^\times)^2$ with the property that
\begin{equation}\label{eq:g_conjugates_same_size}
  \max\{\absv{s},\absv{t}\} \ll \max\{\abs{s}_w,\abs{t}_w\} \ll \max\{\absv{s},\absv{t}\}
\end{equation}
holds for all $v,w \in \archplaces$ and all $(s,t)\in \mathcal{G}$. We can
construct such a fundamental domain using, for example, the method from
\cite[Section 4]{MR2247898}.  Define the set
\begin{equation}\label{eq:def_B}
\mathcal{B}(x):=
\left\{(s,t)\in \Ok^2\cap \mathcal{G}
\ :
\begin{array}{l}
H((s:t))\leq x,
\\
s\Ok + t\Ok = \Ok
,\\
\pi^{-1}(s:t)\text{ is nonsingular}
\end{array}
\right\},
\end{equation}
where $H((s:t))$ is the usual exponential Weil height on $\PP^1(K)$.
For the purpose of acquiring a lower bound it is sufficient to
restrict the summation to points $(s:t)$ with representatives in
$\mathcal{B}(B^\delta)$, with the value $\delta :=
\beta/(2(1+\gamma))$. Then $N_{\K,H}(U,B)$ is larger than
\begin{equation*}
\sum_{
(s,t) \in \mathcal{B}(B^{\delta})}
N_{\K,H}(\pi^{-1}(s:t)\cap U, B)
=
\sum_{
(s,t) \in \mathcal{B}(B^{\delta})}
N_{\K,H}(\pi^{-1}(s:t), B) + O(B^{2\delta}),
\end{equation*}
by Schanuel's theorem, since every nonsingular conic contains at
most $54$ points lying on lines in $X$.

We use the isomorphism $\phi_{(s,t)}$ defined above to identify
$\pi^{-1}(s:t)$ with the plane conic $C_{(s,t)}$ given by
\eqref{eq:plane_conic}. The height $H$ on
$\pi^{-1}(s:t)$ is pulled back to the height $H\circ\phi_{(s,t)} = H_\vl$
on $C_{(s,t)}(\K)$, with $\lambda_v := \max\{\absv{s},\absv{t}\}$ for all
$v\in\archplaces$,
making the succeeding equality apparent,
\begin{equation*}
  N_{\K,H}(\pi^{-1}(s:t),B) = N_{\K,H_\vl}(C_{(s,t)},B).
\end{equation*}
Clearly, $\qfheight{Q_{(s,t)}} \ll H((s:t))^2$, and due to
\eqref{eq:g_conjugates_same_size} we have $M_\vl\ll 1$. Lemma
\ref{prop:conics} therefore
reveals that
\begin{equation*}
  N_{\K,H}(\pi^{-1}(s:t),B) = c(s,t) B + O(B^{1-\beta}H((s:t))^{2\gamma}),
\end{equation*}
with an explicit formula for $c(s,t):=c_{\K,\vl, C_{(s,t)}}$ given below Lemma \ref{prop:conics}.
Our choice of
$\delta$ implies that
\begin{equation}\label{eq:lower_bound_sum}
  N_{K,H}(U,B) \gg B
  \
\mathfrak{S}(B^{\delta})
+ O(B),
\end{equation}
where
\[\mathfrak{S}(x):=
\hspace{-0.3cm}
\sum_{
(s,t) \in \mathcal{B}(x)}
\hspace{-0.3cm}
c(s,t) .
\]
Our last undertaking 
is to show that
the quantity $\mathfrak{S}(B^\delta)$, the sum of the Peyre constants of the smooth conic fibres, 
provides the logarithmic factors appearing 
in Theorem \ref{thm:ssd}.

\section{The proof of Theorem \ref{thm:ssd}}\label{sec:proof}
For each place $v$ of $\K$, let $\sigma_v(s,t)$ be as in
\eqref{eq:sigma_v_arch},\eqref{eq:sigma_v_nonarch}, with the
parameterizing functions $\vq = \vq_{(s,t)}$ defined as in
\eqref{eq:def_q} for the quadratic form $Q_{(s,t)}$, and the norms
$\norm{\cdot}_{\vl,v}$ as in \eqref{eq:def_norms}, with $\lambda_v =
\max\{\absv{s},\absv{t}\}$. Let $\zeta_\K$ be the Dedekind zeta function of $\K$ and $\phi_\K$ be Euler's totient function for nonzero ideals of $\Ok$. Moreover, for nonzero ideals $\aaa$ of $\Ok$, we define the multiplicative function
\begin{equation*}
  \phi_\K^\dagger(\aaa) := \prod_{\ppp\mid\aaa}\left(1+\frac{1}{\N\ppp}\right),
\end{equation*}
where the product extends over all prime ideals $\ppp$ dividing $\aaa$. Clearly,
\begin{equation*}
  \frac{1}{\zeta_\K(2)} \leq \frac{\phi_\K^\dagger(\aaa)\phi_\K(\aaa)}{\N\aaa} \leq 1
\end{equation*}
holds for all $\aaa$.

\begin{lemma}[The non-archimedean densities]
\label{lem:nonarch_densities_est}
Let $\eta$ be any positive constant and
suppose $s,t\in\Ok$
fulfill $s\Ok + t\Ok   = \Ok$. Then we have
  \begin{equation*}
    \prod_{v\in\nonarchplaces}\sigma_v(s,t) \geq \frac{1}{\zeta_\K(2)}\sum_{\substack{\N\aaa \leq B^{\eta}\\\aaa\mid\Delta(s,t)\\\aaa + W_0\Ok = \Ok}}\left(\frac{\phi_\K(\aaa)}{\N\aaa}\right)^2.
  \end{equation*}
\end{lemma}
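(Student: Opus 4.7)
The plan is to factor $\prod_{v\in\nonarchplaces}\sigma_v = \prod_\ppp \sigma_\ppp$ as an Euler product over prime ideals $\ppp$ of $\Ok$, bound each factor $\sigma_\ppp$ from below prime-by-prime with an improvement at primes dividing $\Delta(s,t)$ and coprime to $W_0$, and then expand the resulting product to recover the claimed sum over $\aaa$.

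At every prime the trivial bound $\sigma_\ppp \ge 1 - 1/\N\ppp^2$ follows from \eqref{eq:sigma_v_nonarch} by discarding the non-negative tail $\sum_{d\ge 1}\rho_\vq^*(\ppp^d)/\N\ppp^d$. The key additional input comes from primes $\ppp$ with $\ppp\mid\Delta(s,t)$ and $\ppp + W_0\Ok = \Ok$. Writing $a=a(s,t)$, etc., formula \eqref{eq:def_q} reads
\[
\vq(u,v) = \bigl(u(bu+ev),\ -(au^2+duv+fv^2),\ v(bu+ev)\bigr),
\]
so the first and third components vanish on the locus $bu+ev=0$. The hypotheses $s\Ok+t\Ok=\Ok$ and $\ppp\nmid W_0$ together imply that $b$ and $e$ cannot vanish simultaneously modulo $\ppp$; assume without loss of generality that $b\in(\Ok/\ppp)^\times$. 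Then for each $\tau\in\Ok/\ppp^d$ that is a unit modulo $\ppp$, the pair $(\sigma,\tau) := (-(e/b)\tau,\tau)$ is primitive in $(\Ok/\ppp^d)^2$ (since $\tau\Ok+\ppp^d\Ok=\Ok$), and a direct calculation gives $\vq(\sigma,\tau) \equiv (0,\,-\Delta(s,t)\tau^2/b^2,\,0) \pmod{\ppp^d}$, which is $\equiv\vz\pmod{\ppp^d}$ whenever $d\le v_\ppp(\Delta(s,t))$. This yields $\rho_\vq^*(\ppp^d) \ge \N\ppp^{d-1}(\N\ppp-1)$ for $1\le d\le v_\ppp(\Delta(s,t))$.

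Inserting this into \eqref{eq:sigma_v_nonarch} and summing the resulting geometric progression gives
\[
\sigma_\ppp \ge 1 - \frac{1}{\N\ppp^2} + v_\ppp(\Delta(s,t))\Bigl(1-\frac{1}{\N\ppp}\Bigr)^2,
\]
and this exceeds $(1 - 1/\N\ppp^2)\bigl(1 + v_\ppp(\Delta(s,t))(1-1/\N\ppp)^2\bigr)$ by the non-negative quantity $v_\ppp(\Delta(s,t))(1-1/\N\ppp)^2/\N\ppp^2$. Taking the product over all primes and using $\prod_\ppp(1 - 1/\N\ppp^2) = 1/\zeta_\K(2)$ yields
\[
\prod_{v\in\nonarchplaces}\sigma_v \ge \frac{1}{\zeta_\K(2)}\prod_{\substack{\ppp\mid\Delta(s,t)\\ \ppp\nmid W_0}}\Bigl(1 + v_\ppp(\Delta(s,t))(1-1/\N\ppp)^2\Bigr).
\]
Expanding the finite product over divisors $\aaa$ of $\Delta(s,t)$ that are coprime to $W_0$ gives exactly $\sum_{\aaa}(\phi_\K(\aaa)/\N\aaa)^2$ (recall that $\phi_\K(\aaa)/\N\aaa$ depends only on the radical of $\aaa$, and for fixed $\aaa_0 = \mathrm{rad}(\aaa)$ the number of $\aaa\mid\Delta(s,t)$ with this radical is $\prod_{\ppp\mid\aaa_0}v_\ppp(\Delta(s,t))$); truncating to $\N\aaa\le B^\eta$ only decreases this sum.

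I expect the main technical care to go into the construction of the primitive solutions: the symmetric subcase where $e$ rather than $b$ is a unit modulo $\ppp$ must also be handled, and one must check that the pair produced is genuinely primitive in $(\Ok/\ppp^d)^2$. Once the lower bound on $\rho_\vq^*(\ppp^d)$ is in place, the remaining steps are a routine Euler-product manipulation.
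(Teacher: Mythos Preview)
Your proof is correct and follows essentially the same approach as the paper: both arguments rest on the observation that primitive pairs on the line $bu+ev\equiv 0$ satisfy $\vq(u,v)\equiv \vz$ modulo any $\aaa\mid\Delta(s,t)$ coprime to $W_0$, yielding $\rho_\vq^*(\aaa)\ge\phi_\K(\aaa)$. The only cosmetic difference is that the paper first expands $\prod_\ppp\sigma_\ppp$ as the Dirichlet series $\zeta_\K(2)^{-1}\sum_\aaa \rho_\vq^*(\aaa)/(\phi_\K^\dagger(\aaa)\N\aaa)$ and then bounds the summand by $(\phi_\K(\aaa)/\N\aaa)^2$, whereas you bound each $\sigma_\ppp$ individually and re-expand the product at the end.
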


\begin{proof}
  Let $\rho_{(s,t)}^*(\aaa) := \rho_{\vq_{(s,t)}}^*(\aaa)$ as in
  \eqref{eq:def_rho}. Expanding the Euler product present in the lemma reveals its equality to
  \begin{equation*}
  \frac{1}{\zeta_\K(2)}\sum_{\aaa}\frac{\rho_{(s,t)}^*(\aaa)}{\phi_\K^\dagger(\aaa)\N\aaa}\geq \frac{1}{\zeta_\K(2)}\sum_{\substack{\N\aaa\leq B^\eta
\\\aaa\mid\Delta(s,t)\\\aaa+W_0\Ok=\Ok}}\frac{\rho_{(s,t)}^*(\aaa)}{\phi_\K^\dagger(\aaa)\N\aaa}.
  \end{equation*}
  Let $\aaa$ be an ideal of $\Ok$ with $\aaa \mid \Delta(s,t)$ and
  $\aaa + W_0\Ok = \Ok$.  We proceed to show that
  $\rho_{(s,t)}^*(\aaa) \geq \phi_\K(\aaa)$. Since $s^3W_0$ and
  $t^3W_0$ can be expressed as $\Ok$-linear combinations of $b(s,t)$
  and $e(s,t)$, we see that $b(s,t)\Ok + e(s,t)\Ok + \aaa = \Ok$. For
  every $\lambda \in \Ok/\aaa$ with $\lambda\Ok + \aaa = \Ok$, let $u
  := \lambda e(s,t)$ and $v := -\lambda b(s,t)$. Then
  $\vq_{(s,t)}(u,v) \equiv 0 \pmod \aaa$, and thus $\rho_{(s,t)}^*\geq
  \phi_\K(\aaa)$.
\end{proof}

\begin{lemma}[The archimedean densities]
\label{lem:arch_densities_est}
Suppose that $s$ and $t$ satisfy the assumption of Lemma \ref{lem:nonarch_densities_est}.
Then we have
  \begin{equation*}
  \prod_{v\in\archplaces}\sigma_v(s,t) \gg \frac{1}{H((s:t))^{2}}.
\end{equation*}
\end{lemma}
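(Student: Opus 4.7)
The plan is to bound each archimedean density $\sigma_v(s,t)$ from below separately by $\lambda_v^{-2\locdegv}$, and then collapse the resulting product using the product formula for the Weil height.

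First I would examine the size of the components of $\vq_{(s,t)}(y_1,y_2)$ as displayed in \eqref{eq:def_q}. Since $a,d,f$ are linear and $b,e$ are quadratic forms in $(s,t)$, for every $v\in\archplaces$ we have $\absv{a(s,t)}, \absv{d(s,t)}, \absv{f(s,t)} \ll \lambda_v$ and $\absv{b(s,t)}, \absv{e(s,t)} \ll \lambda_v^2$, with constants depending only on $X$ and $\K$. The triangle inequality then yields
\[
\absv{(\vq_{(s,t)}(y_1,y_2))_i} \ll \lambda_v^{2}\max\{\absv{y_1},\absv{y_2}\}^2 \qquad \text{for } i=1,3,
\]
and
\[
\absv{(\vq_{(s,t)}(y_1,y_2))_2} \ll \lambda_v\max\{\absv{y_1},\absv{y_2}\}^2,
\]
so that the definition \eqref{eq:def_norms} of the norm (which supplies exactly the extra factor $\lambda_v$ in the middle slot) gives $\norm{\vq_{(s,t)}(y_1,y_2)}_{\vl,v} \ll \lambda_v^{2}\max\{\absv{y_1},\absv{y_2}\}^2$.

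Next I would fix a constant $c=c(X,\K)>0$ sufficiently small so that this upper bound is $\leq 1$ whenever $\max\{\absv{y_1},\absv{y_2}\}\leq c\lambda_v^{-1}$. The polydisc $\{(y_1,y_2)\in\K_v^2 : \absv{y_i}\leq c\lambda_v^{-1}\}$ is then contained in the set defining $\sigma_v(s,t)$ via \eqref{eq:sigma_v_arch}, and its Lebesgue measure on $\K_v^2\cong\RR^{2\locdegv}$ is $\asymp \lambda_v^{-2\locdegv}$ (it equals $(2c\lambda_v^{-1})^2$ in the real case and $(\pi c^2\lambda_v^{-2})^2$ in the complex case). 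Therefore $\sigma_v(s,t)\gg \lambda_v^{-2\locdegv}$ uniformly in $(s,t)$.

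Finally I would multiply over archimedean places and invoke the product formula. The hypothesis $s\Ok+t\Ok=\Ok$ forces $\max\{\absv{s},\absv{t}\}=1$ at every $v\in\nonarchplaces$, so $H((s:t))=\prod_{v\in\archplaces}\lambda_v^{\locdegv}$, and consequently
\[
\prod_{v\in\archplaces}\sigma_v(s,t)\gg\prod_{v\in\archplaces}\lambda_v^{-2\locdegv}=H((s:t))^{-2}.
\]
The argument is essentially routine; there is no real obstacle. The only point requiring attention is the uniformity of all implied constants in $(s,t)$, which is immediate since they depend only on the fixed integer coefficients of $a,b,d,e,f$, hence only on $X$ and $\K$.
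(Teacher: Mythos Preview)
Your proof is correct and follows exactly the same approach as the paper: bound the coefficients of $\vq_{(s,t)}$ in terms of $\lambda_v=\max\{\absv{s},\absv{t}\}$, conclude that a polydisc of radius $\asymp\lambda_v^{-1}$ sits inside the region $\{\norm{\vq_{(s,t)}(y_1,y_2)}_{\vl,v}\leq 1\}$, read off the volume, and use coprimality to identify $\prod_v\lambda_v^{n_v}$ with $H((s:t))$. You have merely made explicit a few steps the paper leaves implicit (the separate treatment of the three components and the r\^ole of the $\lambda_v$-weight in the middle coordinate).
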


\begin{proof}
  The estimates
 \begin{align*}
 &\absv{b(s,t)}, \absv{e(s,t)} \ll
  \max\{\absv{s},\absv{t}\}^2 \ \ \ \ \text{and} \ \\
  &\absv{a(s,t)},\absv{d(s,t)},\absv{f(s,t)} \ll
  \max\{\absv{s},\absv{t}\}
\end{align*}
hold for each place $v\in\archplaces$. Hence, all $(y_1,y_2)\in \K_v^2$ satisfying
\[
\absv{y_1}, \absv{y_2}
\ll \max\{\absv{s},\absv{t}\}^{-1},
\]
with a suitably small absolute implied constant,
must
fulfill
$\norm{\vq_{(s,t)}(y_1,y_2)}_{\vl,v}\leq 1$.
We therefore get that
\begin{equation*}
  \prod_{v\in\archplaces}\sigma_v(s,t) \gg
 \prod_{v\in\archplaces}\max\{\absv{s},\absv{t}\}^{-2\locdegv} = H((s:t))^{-2}.\qedhere
\end{equation*}
\end{proof}
By \eqref{eq:constant_product_formula}, Lemma
\ref{lem:nonarch_densities_est} and Lemma
\ref{lem:arch_densities_est}, we obtain
\begin{equation}\label{eq:S_est_after_densities}
\mathfrak{S}(B^\delta)
\ \
 \gg
\hspace{-0,5cm}
\sum_{
(s,t) \in \mathcal{B}(B^{\delta})}
\frac{1}{H((s:t))^2}\sum_{\substack{\N\aaa \leq B^{\eta}\\\aaa\mid\Delta(s,t)\\\aaa + W_0\Ok = \Ok}}\left(\frac{\phi_\K(\aaa)}{\N\aaa}\right)^2.
\end{equation}
We observe that, apart from the condition $(s,t)\in \mathcal{G}$ from
\eqref{eq:def_B}, every expression involving $(s,t)$ in the above
formula is invariant under scalar multiplication of $(s,t)$ by units
in $\units$. Hence, we may replace $\mathcal{G}$ by another
fundamental domain $\mathcal{H}$, which will enable us to continue our
estimation of $\mathfrak{S}(x)$.  We obtain a fundamental domain
$\mathcal{H}_0$ for the action of $\units$ on $(\K\otimes_\QQ\RR)^\times$ by
making use of the embedding $ K^\times \to (K\otimes_\QQ \RR)^\times =
\prod_{v\in\archplaces}K_v^\times $ as well as the construction in
\cite[Section 4]{MR2247898} for the trivial distance functions
\[N_v : K_v \to [0,\infty) \ ,\  s\mapsto\absv{s}.\] The norm
$N:K\to\QQ$ extends to $K\otimes_\QQ\RR\to\RR$ in an obvious way. The
sets $\mathcal{H}_0(T):=\{s \in \mathcal{H}_0 \where \abs{N(s)}\leq
T\}$ clearly satisfy $\mathcal{H}_0(T) = T^{1/n}\mathcal{H}_0(1)$, and
by \cite[Lemma 3]{MR2247898}, the set $\mathcal{H}_0(1)$ is bounded
with Lipschitz-parameterizable boundary. This enables us to perform
lattice point counting arguments in the sets $\mathcal{H}_0(T)$ and
their translates, via \cite[Lemma 2]{MR2247898} for example.  We
choose $\mathcal{H} := (\mathcal{H}_0\cap K) \times \K^\times \subset (\K^\times)^2$
as our fundamental domain for the action of $\units$ on $\K^2$.

Partitioning into congruence classes modulo $\aaa$ yields
\begin{equation}\label{eq:S_est_congruence_classes}
\mathfrak{S}(B^\delta)
 \gg \sum_{\substack{\N\aaa \leq B^\eta\\\aaa+W_0\Ok = \Ok}}\left(\frac{\phi_\K(\aaa)}{\N\aaa}\right)^2\sum_{\substack{(\sigma, \tau)\bmod \aaa\\\sigma\Ok+\tau\Ok+\aaa=\Ok \\\aaa\mid\Delta(\sigma,\tau)}}
G_{\sigma,\tau}(B^\delta,\aaa)
,
\end{equation}
where
\[
G_{\sigma,\tau}(x,\aaa)
:=
\sum_{\substack{(s,t)\in (\Ok\cap \mathcal{H}_0)\times \Ok\\s\Ok+t\Ok=\Ok\\(s,t)\equiv (\sigma,\tau)\bmod \aaa\\H((s:t))\leq x
\\C_{(s,t)}\text{ nonsingular}}}\frac{1}{H((s:t))^2}.
\]
\begin{lemma}[Lattice point counting]
\label{lem:sum_crude_lower_bound}
  Let $\sigma\Ok + \tau\Ok + \aaa = \Ok$. Then
  \begin{equation*}
G_{\sigma,\tau}(x,\aaa)
\gg
\frac{\log x}{\N\aaa \ \phi_\K(\aaa) \ \phi_\K^\dagger(\aaa)} + O\left(x^{-\frac{1}{2n}}\log x \right).
  \end{equation*}
\end{lemma}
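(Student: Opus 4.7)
The plan is to estimate $G_{\sigma,\tau}(x,\aaa)$ by a three-step process: (i) use M\"obius inversion to relax the coprimality $s\Ok+t\Ok=\Ok$; (ii) count lattice points in each resulting residue class via a Lipschitz bound; (iii) recover the weight $1/H((s:t))^2$ through a dyadic decomposition of the height range. The nonsingularity of $C_{(s,t)}$ fails only on the finitely many projective points $(s:t)$ with $\Delta(s,t)=0$, contributing $O(1)$ to $G_{\sigma,\tau}(x,\aaa)$, which is absorbed into the claimed error. Writing $\mathbf{1}[s\Ok+t\Ok=\Ok]=\sum_{\ddd\supseteq s\Ok+t\Ok}\mu(\ddd)$ with $\mu$ the M\"obius function on $\Ok$-ideals, the hypothesis $\sigma\Ok+\tau\Ok+\aaa=\Ok$ forces $\ddd+\aaa=\Ok$ (otherwise the inner sum is empty), while $|\N s|\leq H((s:t))\leq x$ for every nonzero $s\in\ddd$ truncates the outer sum at $\N\ddd\leq x$.

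For each such $\ddd$, the admissible pairs $(s,t)$ form a coset of the lattice $\aaa\ddd\times\aaa\ddd$ in $(\K\otimes_{\QQ}\RR)^2\cong\RR^{2\dg}$, of covolume $(\N(\aaa\ddd))^2|\Delta_\K|$. The region $R(y) := \{(s,t) : s\in\mathcal{H}_0,\ H((s:t))\leq y\}$ has volume $V_1 y^2$ by homogeneity, and is Lipschitz-parameterizable with parameter $\asymp y^{1/\dg}$ thanks to the scaling $\mathcal{H}_0(T)=T^{1/\dg}\mathcal{H}_0(1)$ and the Lipschitz property of $\mathcal{H}_0(1)$ from \cite[Section~4]{MR2247898}. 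Applying \cite[Lemma~2]{MR2247898} then yields a count
\[
N_\ddd(y) \,=\, \frac{V_1\, y^2}{(\N(\aaa\ddd))^2\,|\Delta_\K|} \,+\, O\!\left(\frac{y^{(2\dg-1)/\dg}}{(\N(\aaa\ddd))^{(2\dg-1)/\dg}}\right).
\]

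To extract the $\log x$ factor while keeping the error small, I would restrict to the dyadic range $H((s:t))\in(\sqrt{x},\,x]$. On each level $(x/2^{k+1},x/2^k]$ the weighted main term contributes $\gg V_1/((\N(\aaa\ddd))^2|\Delta_\K|)$, independently of $k$, whereas the weighted lattice-counting error is $\ll (x/2^k)^{-1/\dg}/(\N(\aaa\ddd))^{(2\dg-1)/\dg}$. Summing over the $\asymp\log x$ dyadic levels therefore produces a main term of size $\gg \log x/((\N(\aaa\ddd))^2|\Delta_\K|)$ and a geometric-series error of size $\ll x^{-1/(2\dg)}/(\N(\aaa\ddd))^{(2\dg-1)/\dg}$. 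Summing over $\ddd$ and evaluating the Dirichlet series
\[
\sum_{\ddd+\aaa=\Ok}\frac{\mu(\ddd)}{(\N\ddd)^2} \,=\, \frac{1}{\zeta_\K(2)}\prod_{\ppp\mid\aaa}\bigl(1-\N\ppp^{-2}\bigr)^{-1} \,=\, \frac{\N\aaa}{\zeta_\K(2)\,\phi_\K(\aaa)\,\phi_\K^\dagger(\aaa)}
\]
transforms the main term into the desired $\gg \log x/(\N\aaa\,\phi_\K(\aaa)\,\phi_\K^\dagger(\aaa))$. The main obstacle is the uniform control of the $\ddd$-summed error: the residual sum $\sum_{\ddd+\aaa=\Ok}(\N\ddd)^{-(2\dg-1)/\dg}$ converges for $\dg\geq 2$ but diverges harmonically for $\dg=1$, producing precisely the $\log x$ in the stated error $O(x^{-1/(2\dg)}\log x)$; one also has to verify that truncating the M\"obius sum at $\N\ddd\leq x$ introduces only a negligible additional error.
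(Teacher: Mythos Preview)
Your M\"obius step, the dyadic restriction to $H\in(\sqrt{x},x]$, and the evaluation of $\sum_{\ddd+\aaa=\Ok}\mu(\ddd)/\N\ddd^{2}$ are all correct and match the paper. There are, however, two genuine gaps.

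First, the $O(1)$ you incur by discarding the singular fibres is \emph{not} absorbed into $O(x^{-1/(2n)}\log x)$, since the latter tends to $0$. Worse, in the application (deriving \eqref{eq:S_est_ideals} from \eqref{eq:S_est_congruence_classes}) this extra $O(1)$ is summed over all admissible $(\sigma,\tau)$ and over $\aaa$ with $\N\aaa\leq B^{\delta/(7n)}$; since each singular $(s_0,t_0)$ has $\Delta(s_0,t_0)=0$, it lands in an admissible class for \emph{every} such $\aaa$, producing a total loss of order $B^{\delta/(7n)}$, which destroys the $(\log B)^r$ lower bound.

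Second, and more seriously, the two-variable region $R(y)=\{(s,t):s\in\mathcal H_0,\ H_\infty(s,t)\leq y\}$ is \emph{unbounded}: for $s\in\mathcal H_0(1)$ with $|N(s)|$ small, one can take some $|t|_v$ arbitrarily large while keeping $H_\infty(s,t)\leq 1$ (the other factors compensate). Thus $R(1)$ is not Lipschitz-parameterizable by a single parameter and the error term you quote from \cite[Lemma~2]{MR2247898} is not available; the same problem persists on each dyadic shell $R(y)\smallsetminus R(y/2)$.

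The paper resolves both issues with one stroke: it passes to the sub-cone $0<|t|_v<\alpha|s|_v$ for all $v\in\archplaces$ (and $\sqrt{x}\leq|N(s)|\leq x$). This cone avoids all five singular lines automatically, forces $H((s:t))=|N(s)|$, and turns the count into an iterated one-dimensional problem: for fixed $s$ the $t$-sum is over a genuine box of volume $\asymp|N(s)|/\N(\aaa\ddd)$, and then one sums over $s\in\ddd\cap\mathcal H_0$ by counting translated ideal-lattice points in $\mathcal H_0(x)$ together with Abel summation. Restricting to such a cone (or any bounded homogeneous sub-region away from the singular lines) would repair your argument with essentially the same bookkeeping.
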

\begin{proof}
The discriminant $\Delta(s,t)$ is a quintic form
whence
the conic $C_{(s,t)}$ is singular for $(s,t)$ lying on one of at most $5$ lines through the origin in $\K^2$. 
Hence, there exists a constant $0 < \alpha < 1$, depending only on $F$ and $\K$, such that $C_{(s,t)}$ is nonsingular whenever $s,t\neq 0$ and $\absv{t} < \alpha\absv{s}$ holds for all $v \in \archplaces$. Observe that for such $(s,t)$ with $s\Ok+t\Ok = \Ok$ we have $H((s:t)) = \abs{N(s)}$. This shows that 
  \begin{equation*}
  G_{\sigma,\tau}(x,\aaa)
   \gg \sum_{\substack{(s,t)\in (\Ok\cap\mathcal{H}_0)\times
\Ok\\s\Ok+t\Ok=\Ok\\(s,t)\equiv (\sigma,\tau)\bmod \aaa\\ x^{1/2}\leq \abs{N(s)} \leq x \\0<\absv{t}<\alpha\absv{s} \forall v\in\archplaces}}
\hspace{-0,4cm}
\abs{N(s)}^{-2} =:
G(x), \ \text{say}.
  \end{equation*}
Using M\"obius inversion to remove the coprimality condition, we see that
\begin{equation*}
  G(x)
  =\sum_{\substack{\N\ddd\leq x
\\\ddd+\aaa=\Ok}}\mu_\K(\ddd)\sum_{\substack{s\in \ddd\cap \mathcal{H}_0\\s \equiv \sigma \bmod \aaa\\x^{1/2}\leq \abs{N(s)}\leq x}}
\abs{N(s)}^{-2}
\hspace{-0,4cm}
\sum_{\substack{t\in\ddd\\t\equiv \tau\bmod \aaa\\0<\absv{t}<\alpha\absv{s}\forall v\in\archplaces}}
\hspace{-0,4cm}
1 \ \ .
\end{equation*}
The condition $\ddd+\aaa=\Ok$ comes from our hypothesis that $\sigma\Ok + \tau\Ok + \aaa = \Ok$. The sum over $t$ is just counting ideal-lattice points in a translated ``box'', and their number is well known to be
\begin{equation*}
  \frac{c_\K\alpha^\dg \abs{N(s)}}{\N(\aaa\ddd)} + O\left(\left(\frac{\alpha^{\dg} \abs{N(s)}}{\N(\aaa\ddd)}\right)^{(\dg-1)/\dg} + 1 \right),
\end{equation*}
with a positive constant $c_\K$ depending only on $\K$ (see, for example, the proof of \cite[Lemma 7.1]{Marta}). Hence,
\begin{align*}
  G(x)
  &= \frac{c_\K\alpha^\dg}{\N\aaa}\sum_{\substack{\N\ddd\leq x \\\ddd+\aaa=\Ok}}\frac{\mu_K(\ddd)}{\N\ddd}\sum_{\substack{s\in \ddd\cap \mathcal{H}_0\\s \equiv \sigma \bmod \aaa\\ x^{1/2}
\leq \abs{N(s)}\leq x}}\frac{1}{\abs{N(s)}} + O\left(\sum_{\N\ddd\leq x}\sum_{\substack{s\in\ddd\cap \mathcal{H}_0\\x^{1/2}
\leq \abs{N(s)}\leq x}}\frac{1}{\abs{N(s)}^2}\right)\\
 & + O\left(\frac{1}{\N\aaa^{(\dg-1)/\dg}}\sum_{\N\ddd\leq x}\frac{1}{\N\ddd^{(\dg-1)/\dg}}\sum_{\substack{s\in \ddd\cap \mathcal{H}_0\\x^{1/2}\leq \abs{N(s)}\leq x}}\frac{1}{\abs{N(s)}^{1+1/\dg}}\right).
\end{align*}
The sums over $s$ in the error terms are taken over principal ideals of $\Ok$ contained in $\ddd$. For any $a > 0$, we have
\begin{equation*}
  \sum_{\substack{s\in \ddd\cap \mathcal{H}_0\\x^{1/2}\leq \abs{N(s)}\leq x}}\frac{1}{\abs{N(s)}^{1+a}} \ll \sum_{\substack{\bbb \in [\ddd^{-1}]\\\N\bbb \geq x^{1/2}\N\ddd^{-1}}}\frac{1}{\N(\bbb\ddd)^{1+a}} \ll \frac{1}{\N\ddd \  x^{a/2}}.
\end{equation*}
This shows that both error terms in the above expression for $G(x)$ are
$\ll x^{-1/(2n)} \log x$. Using the nice properties of our fundamental domain $\mathcal{H}_0$
and \cite[Lemma 2]{MR2247898}, we see that
\begin{equation*}
  \sharp\{s \in \ddd \cap \mathcal{H}_0 \where s\equiv \sigma\bmod \aaa\text{, }\abs{N(s)}\leq x\} = \frac{c_\K' x}{\N(\ddd\aaa)}+O\left(\left(\frac{x}{\N(\ddd\aaa)}\right)^{(n-1)/n}+1\right),
\end{equation*}
with a positive constant $c_\K'$ depending only on $\K$. Together with the Abel sum formula we are thus provided with
the asymptotic formula
\begin{equation*}
  \sum_{\substack{s\in \ddd\cap \mathcal{H}_0\\s \equiv \sigma \bmod \aaa\\ x^{1/2}\leq \abs{N(s)}\leq x }}\frac{1}{\abs{N(s)}} = \frac{c_\K'}{\N(\ddd\aaa)}\log(x) + O\left(x^{-1/(2n)}\right),
\end{equation*}
from which it is immediately apparent that
\begin{equation*}
  G(x)
   \gg \frac{\log x}{\N\aaa^2}\sum_{\substack{\N\ddd\leq x \\\ddd+\aaa=\Ok}}\frac{\mu_\K(\ddd)}{\N\ddd^2} + O\left(x^{-1/(2n)}\log x\right).
\end{equation*}
Finally, the obvious estimate
\begin{equation*}
  \sum_{\substack{\N\ddd\leq x\\\ddd+\aaa=\Ok}}\frac{\mu_\K(\ddd)}{\N\ddd^2} = \frac{\N\aaa}{\zeta_\K(2)\phi_\K(\aaa)\phi_\K^\dagger(\aaa)} + O\left(\frac{1}{x}\right)
\end{equation*}
allows us to complete the proof of the lemma.
\end{proof}
For any binary form $g\in \Ok[u,v]$, we define the multiplicative function 
$ \varrho_g^*(\aaa)$
on non--zero ideals of $\Ok$ by
\begin{equation*}
 \sharp\{(\sigma,\tau)\in(\Ok/\aaa)^2, \sigma\Ok+\tau\Ok+\aaa = \Ok\text{, }g(\sigma,\tau)\equiv 0\bmod \aaa\}
\end{equation*}
and
note that
its value is trivially bounded by $\N\aaa^2$. From the estimate \eqref{eq:S_est_congruence_classes} with $\eta := \delta/(7\dg)$ and Lemma \ref{lem:sum_crude_lower_bound}, we obtain
\begin{equation}
  \label{eq:S_est_ideals}
\mathfrak{S}(B^\delta)
\gg
  \hspace{-0.5cm}
\sum_{\substack{\N\aaa\leq B^{\delta/(7\dg)}\\a+W_0\Ok = \Ok}}\frac{\varrho_\Delta^*(\aaa)}{\N\aaa^2}\left(\frac{\phi_\K(\aaa)}{\N\aaa}\right)^2 + O(1).
\end{equation}
The following lemma is proved via an application of Wirsing's theorem
and its validity implies that of Theorem \ref{thm:ssd}.
\begin{lemma}
  For $x\geq 1$,
  \begin{equation*}
    \sum_{\substack{\N\aaa\leq x\\\aaa+W_0\Ok = \Ok}}\frac{\varrho_\Delta^*(\aaa)}{\N\aaa^2}\left(\frac{\phi_\K(\aaa)}{\N\aaa}\right)^2 \gg (\log x)^r.
  \end{equation*}
\end{lemma}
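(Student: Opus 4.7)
The plan is to apply Lemma \ref{thm:wirsing} to the multiplicative function $g$ that is supported on squarefree ideals of $\Ok$ coprime to $W_0$ and satisfies
\[
g(\ppp) := \frac{\varrho_\Delta^*(\ppp)}{\N\ppp^2}\left(\frac{\phi_\K(\ppp)}{\N\ppp}\right)^2
\]
on every prime $\ppp \nmid W_0$. All summands in the original sum are non-negative, so it suffices to show $\sum_{\N\aaa\leq x}g(\aaa) \gg (\log x)^r$, and I would do this by verifying the three hypotheses of Lemma \ref{thm:wirsing} with $k = r$.

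The essential step is \eqref{eq:a15}. Factor $\Delta$ in $\Ok[s,t]$ as $c\prod_{i=1}^r \Delta_i$ with pairwise non-associate $\K$-irreducible binary forms $\Delta_i$; recall that $r$ is precisely the number of such factors, by the discussion preceding \eqref{eq:pic_factors}. Each $\Delta_i$ defines an étale $\K$-algebra $L_i$ of degree $\deg \Delta_i$. For any prime $\ppp$ outside a fixed finite bad set (containing the primes dividing $W_0$, the leading coefficients of the $\Delta_i$, or the pairwise resultants and discriminants of the $\Delta_i$), standard commutative algebra gives
\[
\varrho_\Delta^*(\ppp) = (\N\ppp - 1) \sum_{i=1}^r N_{\Delta_i}(\ppp),
\]
where $N_{\Delta_i}(\ppp)$ denotes the number of projective zeros of $\Delta_i$ in $\PP^1(\Ok/\ppp)$; for such $\ppp$ this count equals the number of primes of $L_i$ above $\ppp$ of residue degree one. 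Expanding $(\N\ppp - 1)^3 / \N\ppp^4 = 1/\N\ppp + O(1/\N\ppp^2)$ and applying the prime ideal theorem in each $L_i$ followed by partial summation, one obtains
\[
\sum_{\N\ppp \leq x}g(\ppp)\log \N\ppp = r\log x + O(1),
\]
which is \eqref{eq:a15} with $k = r \geq 0$.

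The remaining two hypotheses are routine. Since $\deg \Delta = 5$ we have $\sum_{i} N_{\Delta_i}(\ppp) \leq 5$ and therefore $g(\ppp) \ll 1/\N\ppp$; hence $\sum_\ppp g(\ppp)^2 \log \N\ppp \ll \sum_\ppp \log\N\ppp/\N\ppp^2 < \infty$, proving \eqref{eq:a17}. Partial summation applied to \eqref{eq:a15} yields $\sum_{w \leq \N\ppp < z}g(\ppp) = r \log(\log z/\log w) + O(1)$, and bounding the relevant product by the exponential of the sum confirms \eqref{eq:a16}. Lemma \ref{thm:wirsing} then delivers $\sum_{\N\aaa \leq x}g(\aaa) = c_g (\log x)^r + O((\log x)^{r-1})$ with a positive constant $c_g$, which is the asserted lower bound. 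The main obstacle, and really the only non-routine ingredient, is the appeal to the prime ideal theorem in the $r$ étale algebras $L_i$ needed for \eqref{eq:a15}; the other conditions and the reduction to squarefree ideals are immediate.
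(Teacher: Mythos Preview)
Your proof is correct and follows essentially the same route as the paper: restrict to squarefree ideals, factor $\Delta$ into its $r$ irreducible components, use the identity $\varrho_\Delta^*(\ppp)=(\N\ppp-1)\sum_i(\text{root count of }\Delta_i)$ for primes outside a finite bad set, invoke the prime ideal theorem in the fields $K(\theta_i)$ (your \'etale algebras $L_i$) to obtain \eqref{eq:a15} with $k=r$, and then apply Lemma~\ref{thm:wirsing}. The only cosmetic difference is that the paper enlarges the excluded set from $W_0$ to a product $W$ absorbing leading coefficients and resultants so that the root--count identity holds on the entire support of $g$, whereas you keep $g$ supported on ideals coprime to $W_0$ and treat the extra bad primes as an $O(1)$ contribution; both are fine since all terms are nonnegative.
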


\begin{proof}
The form $\Delta$
factors as
  $a\Delta(s,t) = \prod_{i=1}^r
\Delta_i(s,t)$
  over $\K$
  for an appropriate value of $a=a(\K,F)\in \Ok$
   and irreducible forms
    $\Delta_i \in \Ok[s,t]$.
For $1\leq i \leq r$ with 
$\Delta_i(1,0)\neq 0$,
 let 
  $\delta_i(x) :=
  \Delta_i(x,1)\in\Ok[x].$
  We moreover define
for any polynomial $g\in\Ok[x]$ and any ideal $\aaa$ of $\Ok$,
\begin{equation*}
  \tau_g(\aaa) := \sharp\{s\in\Ok/\aaa\where g(s)\equiv 0\bmod \aaa\},
\end{equation*}
and we subsequently let
  \begin{equation*}
    \tau_i(\aaa):=
    \begin{cases}
      \tau_{\delta_i}(\aaa) &\text{ if }\Delta_i(1,0)\neq 0,\\
      \tau_{x}(\aaa)=1 &\text{ if }\Delta_i(1,0) = 0,
    \end{cases}
    \text{ and }a_i :=
    \begin{cases}
      \Delta_i(1,0) &\text{ if }\Delta_i(1,0)\neq 0,\\
      1 &\text{ if }\Delta_i(1,0)=0.
    \end{cases}
  \end{equation*}
The asymptotic relationships
  \begin{equation}\label{eq:tau_loglog}
    \sum_{\N\ppp\leq x}\frac{\tau_{i}(\ppp)}{\N\ppp} = \log\log x + O(1)\ \text{ and }  \sum_{\N\ppp\leq x}\frac{\tau_{i}(\ppp)\log(\N\ppp)}{\N\ppp} = \log x + O(1)
  \end{equation}
 follow from Landau's prime ideal theorem applied to
  $\K(\theta_i),$ where $\theta_i$ is a root of the irreducible
  polynomial $\delta_i.$ Since $\Delta$ is separable, all resultants
  $\Res(\Delta_i,\Delta_j)$
  are nonzero. Whence,
  upon introducing 
  \begin{equation*}
    W = W_F := a W_0 \prod_{i\neq j}
\Res(\Delta_i,\Delta_j)
\prod_{i=1}^ra_i \in \Ok\smallsetminus\{0\},
  \end{equation*}
  the equality
\[
    \varrho_\Delta^*(\ppp) = (\N\ppp-1)\sum_{i=1}^r\tau_{i}(\ppp)
 \]
  is rendered valid
  for each nonzero prime ideal $\ppp$
  of $\Ok$, coprime to $W$.
  This fact, along with \eqref{eq:tau_loglog},
  reveals that the multiplicative function defined by
  \begin{equation*}
    g(\aaa):=
    \begin{cases}
      \varrho_\Delta^*(\aaa)\phi_\K(\aaa)^2\N\aaa^{-4} &\text{ if }\aaa+W\Ok = \Ok \text{ and }\aaa\text{ squarefree,}\\
      0 &\text{ otherwise},
    \end{cases}
  \end{equation*}  
  satisfies the assumptions of Lemma \ref{thm:wirsing} with $k =
  r$.
We therefore get that there exists $c_g>0$
such that 
  \begin{equation*}
       \sum_{\substack{\N\aaa\leq x\\a+W\Ok = \Ok\\\aaa\text{ squarefree}}}\frac{\varrho_\Delta^*(\aaa)}{\N\aaa^2}\left(\frac{\phi_\K(\aaa)}{\N\aaa}\right)^2 = c_g(\log x)^r + O((\log x)^{r-1}),
  \end{equation*}
  an estimate
  which concludes our proof.
\end{proof}

\begin{acknowledgements}
  This collaboration was started during the AIM Workshop `\emph{Rational and
  integral points on higher-dimensional varieties}' in 2014. We would like to
  express our gratitude to the American Institute of Mathematics for
  its hospitality.

  We would like to thank Prof.~T.~Browning and Dr.~D.~Loughran for
  their helpful remarks.  While working on this paper the first author
  was supported by a Humboldt Research Fellowship for Postdoctoral
  Researchers and the second author was supported by the
  \texttt{EPSRC} grant \texttt{EP/H005188/1}.
\end{acknowledgements}

\bibliographystyle{alpha}
\bibliography{bibliography}

\begin{thebibliography}{FMT89}

\bibitem[BB11]{MR2838351}
R.~de~la Bret{\`e}che and T.~D. Browning.
\newblock Manin's conjecture for quartic del {P}ezzo surfaces with a conic
  fibration.
\newblock {\em Duke Math. J.}, 160(1):1--69, 2011.

\bibitem[BBD07]{MR2332351}
R.~de~la Bret{\`e}che, T.~D. Browning, and U.~Derenthal.
\newblock On {M}anin's conjecture for a certain singular cubic surface.
\newblock {\em Ann. Sci. \'Ecole Norm. Sup. (4)}, 40(1):1--50, 2007.

\bibitem[BM90]{MR1032922}
V.~V. Batyrev and Yu.~I. Manin.
\newblock Sur le nombre des points rationnels de hauteur born\'e des
  vari\'et\'es alg\'ebriques.
\newblock {\em Math. Ann.}, 286(1-3):27--43, 1990.

\bibitem[Bro01]{broberg}
N.~Broberg.
\newblock Rational points of cubic surfaces.
\newblock In {\em Rational points on algebraic varieties}, pages 13--35.
  Birkh\"{a}user, Basel, 2001.

\bibitem[BSJ14]{mikew}
T.~Browning and M.~Swarbrick~Jones.
\newblock Counting rational points on del {P}ezzo surfaces with a conic bundle
  structure.
\newblock {\em Acta Arith.}, 163:271--298, 2014.

\bibitem[BT98]{MR1679843}
V.~V. Batyrev and Yu. Tschinkel.
\newblock Tamagawa numbers of polarized algebraic varieties.
\newblock {\em Ast{\'e}risque}, (251):299--340, 1998.
\newblock Nombre et r{\'e}partition de points de hauteur born{\'e}e (Paris,
  1996).

\bibitem[BW14]{arXiv:1210.5943}
F.~Barroero and M.~Widmer.
\newblock Counting lattice points and {O}-minimal structures.
\newblock {\em Int. Math. Res. Not. IMRN}, 2014(18):4932--4957, 2014.

\bibitem[DF15]{E6}
U.~Derenthal and C.~Frei.
\newblock On {M}anin's conjecture for a certain singular cubic surface over
  imaginary quadratic fields.
\newblock {\em Int. Math. Res. Not. IMRN}, 2015(10):2728--2750, 2015.

\bibitem[FI10]{OperaDeCribro}
J.~Friedlander and H.~Iwaniec.
\newblock {\em Opera de cribro}, volume~57 of {\em American Mathematical
  Society Colloquium Publications}.
\newblock American Mathematical Society, Providence, RI, 2010.

\bibitem[FMT89]{manin}
J.~Franke, Y.~I. Manin, and Y.~Tschinkel.
\newblock Rational points of bounded height on {F}ano varieties.
\newblock {\em Invent. Math.}, 95(2):421--435, 1989.

\bibitem[FP16]{Marta}
C.~Frei and M.~Pieropan.
\newblock O-minimality on twisted universal torsors and {M}anin's conjecture
  over number fields.
\newblock {\em Ann. Sci. \'Ec. Norm. Sup\'er.}, to appear, 2016.

\bibitem[Fre13]{MR3107569}
C.~Frei.
\newblock Counting rational points over number fields on a singular cubic
  surface.
\newblock {\em Algebra Number Theory}, 7(6):1451--1479, 2013.

\bibitem[HB97]{FourThirds}
D.~R. Heath-Brown.
\newblock The density of rational points on cubic surfaces.
\newblock {\em Acta Arith.}, 79(1):17--30, 1997.

\bibitem[HB98]{mestre}
R.~Heath-Brown.
\newblock Counting rational points on cubic surfaces.
\newblock {\em Ast\'erisque}, (251):13--30, 1998.
\newblock Nombre et r{\'e}partition de points de hauteur born{\'e}e (Paris,
  1996).

\bibitem[MV07]{MR2247898}
D.~Masser and J.~D. Vaaler.
\newblock Counting algebraic numbers with large height. {II}.
\newblock {\em Trans. Amer. Math. Soc.}, 359(1):427--445, 2007.

\bibitem[Pey95]{peyre}
E.~Peyre.
\newblock Hauteurs et mesures de {T}amagawa sur les vari\'et\'es de {F}ano.
\newblock {\em Duke Math. J.}, 79(1):101--218, 1995.

\bibitem[Sal15]{salberger}
P.~Salberger.
\newblock Uniform bounds for rational points on cubic hypersurfaces.
\newblock In {\em Arithmetic and Geometry}, pages 401--421. Cambridge
  University Press, 2015.

\bibitem[Sha94]{MR1328833}
I.~R. Shafarevich.
\newblock {\em Basic algebraic geometry. 1}.
\newblock Springer-Verlag, Berlin, second edition, 1994.
\newblock Varieties in projective space, Translated from the 1988 Russian
  edition and with notes by Miles Reid.

\bibitem[Sof14]{sofos}
E.~Sofos.
\newblock Uniformly counting rational points on conics.
\newblock {\em Acta Arith.}, 166(1):1--14, 2014.

\bibitem[SSD98]{MR1679836}
J.~B. Slater and P.~Swinnerton-Dyer.
\newblock Counting points on cubic surfaces. {I}.
\newblock {\em Ast\'erisque}, (251):1--12, 1998.
\newblock Nombre et r{\'e}partition de points de hauteur born{\'e}e (Paris,
  1996).

\bibitem[Tsc06]{MR2275615}
Yu. Tschinkel.
\newblock Geometry over nonclosed fields.
\newblock In {\em International {C}ongress of {M}athematicians. {V}ol. {II}},
  pages 637--651. Eur. Math. Soc., Z\"urich, 2006.

\bibitem[Woo95]{trev}
T.~D. Wooley.
\newblock Sums of two cubes.
\newblock {\em Internat. Math. Res. Notices}, 1995(4):181--185, 1995.

\end{thebibliography}
\end{document}